\numberwithin{equation}{section}
\newtheorem{thm}{Theorem}[section]
\newtheorem{ques}[thm]{Question}
\renewcommand{\epsilon}{\varepsilon}
\renewcommand{\ge}{\geqslant}
\renewcommand{\le}{\leqslant}
\renewcommand{\geq}{\geqslant}
\renewcommand{\leq}{\leqslant}
\DeclareMathOperator{\spt}{spt}
\DeclareMathOperator{\supp}{spt}
\newcommand{\dd}{\,\mathrm{d}}
\newcommand{\dimh}{\dim_\mathrm{H}}
\newcommand{\dimf}{\dim_\mathrm{F}}
\newtheorem{theorem}{Theorem}[section]
\newtheorem{corollary}[theorem]{Corollary}
\newtheorem{proposition}[theorem]{Proposition}
\theoremstyle{definition}
\title{On the Fourier dimension of $(d,k)$-sets and Kakeya  sets with restricted directions}
\author{Jonathan M. Fraser$^1$, Terence L.~J.~Harris$^2$ \& Nicholas G. Kroon$^1$\\ \\
 $^1$S\MakeLowercase{chool of} M\MakeLowercase{athematics and} S\MakeLowercase{tatistics}, U\MakeLowercase{niversity of} S\MakeLowercase{t} A\MakeLowercase{ndrews}, S\MakeLowercase{cotland} \\
$^2$D\MakeLowercase{epartment of} M\MakeLowercase{athematics}, C\MakeLowercase{ornell} U\MakeLowercase{niversity}, I\MakeLowercase{thaca}, NY 14853, USA\\
\MakeLowercase{Emails: jmf32@st-andrews.ac.uk, tlh236@cornell.edu \& nkroon3@gmail.com}}
\thanks{JMF was  financially supported by an \emph{EPSRC Standard Grant} (EP/R015104/1) and a  \emph{Leverhulme Trust Research Project Grant} (RPG-2019-034). }
\begin{document}


\maketitle
\thispagestyle{empty}

\begin{abstract}
A $(d,k)$-set is a  subset of $\mathbb{R}^d$ containing a $k$-dimensional unit ball  of all possible orientations.    Using an approach of D.~Oberlin we prove various Fourier dimension estimates for compact $(d,k)$-sets.  Our main interest is in restricted $(d,k)$-sets, where the set only contains unit balls with a restricted set of possible orientations $\Gamma$.  In this setting our estimates depend on the Hausdorff dimension of $\Gamma$ and can sometimes be improved if additional geometric properties of $\Gamma$ are assumed. We are led to consider cones and prove that the cone in $\mathbb{R}^{d+1}$ has Fourier dimension $d-1$, which may be of interest in its own right.
\\ \\ 
\emph{Mathematics Subject Classification} 2010: primary: 42B10, 28A80, 28A78.
\\
\emph{Key words and phrases}: Fourier dimension, Kakeya set, $(d,k)$-set, Hausdorff dimension.
\end{abstract}

\section{Kakeya sets, $(d,k)$-sets, and dimension theory}

A \textit{Kakeya set} is a subset of $\mathbb{R}^d$ containing a unit line segment in every direction.   Besicovitch \cite{besicovitch} proved that there exist Kakeya sets with zero $d$-dimensional Lebesgue measure (for any $d \geq 2$) and it is a notorious  problem in geometric measure theory and harmonic analysis to determine if Kakeya sets can be even smaller than this, that is, can they have Hausdorff dimension strictly less than $d$?  The case $d=1$ is trivial and we assume throughout that $d \geq 2$.  Davies proved that Kakeya sets in $\mathbb{R}^2$ must have Hausdorff dimension 2 \cite{davies}.  The problem   is open for $d \geq 3$ but various partial results are known.  Notably, it was proved in \cite{katz}   that Kakeya  sets in $\mathbb{R}^3$ have Hausdorff dimension at least  $5/2+\varepsilon$ for some small constant $\varepsilon>0$ and, in the general case, it was proved in \cite{katztao}   that Kakeya  sets have Hausdorff dimension at least $(2-\sqrt{2})(d-4)+3$. Further improvements in certain dimensions were achieved recently in  \cite{hickman}.  See \cite[Figure 5]{hickman} for a survey of the state of the art.

Oberlin gave a Fourier analytic proof of Davies' result \cite{oberlin} which actually establishes something stronger:  a compact Kakeya set in $\mathbb{R}^2$ must have \emph{Fourier} dimension 2.  Oberlin's result and proof are the starting point for our work and we use his general approach to study variants of the Kakeya problem, especially $(d,k)$-sets with restricted orientations.  

A $(d,k)$-set is a  subset of $\mathbb{R}^d$ containing a $k$-dimensional unit ball  of all possible orientations.  As such, $(d,1)$-sets are  Kakeya sets.  We give a more formal definition below. One can now ask if $(d,k)$-sets with zero $d$-dimensional Lebesgue measure exist for  $d > k \geq 2$?  In fact, this is an open problem in general but it is conjectured that no such sets exist.  Falconer \cite{falconer_1980} proved that $(d,k)$-sets have positive measure whenever $k> d/2$. Around the same time Marstrand \cite{marstrand} proved  that $(3,2)$-sets have positive 3-dimensional measure using a different approach.   This result has subsequently been strengthened by Bourgain \cite{bourgain} and R. Oberlin \cite{oberlin10}. As far as we know the state of the art is that $(d,k)$-sets necessarily have positive Lebesgue measure when $(1+\sqrt{2})^{k-1}+k>d$, see \cite[Chapter 24]{mattila_2015} and the survey \cite{mattila_2019}.

Our main interest is in restricted $(d,k)$-sets, which we introduce now, and in estimates for the Fourier dimension.  The Grassmannian manifold $G(d,k)$ consists of all $k$-dimensional subspaces of $\mathbb{R}^d$.   This is a smooth compact manifold of dimension $k(d-k)$, see \cite{mattila_1995}.  To formally define $(d,k)$-sets it is convenient to associate each subspace  $s\in G(d,k)$ with an orthonormal  basis $\{x_1^s,x_2^s,\ldots,x_k^s\} \subseteq \mathbb{R}^d$.   In what follows it should be clear that the specific choice of basis is irrelevant.  Let $\Gamma \subseteq G(d,k)$.  If  $E\subseteq\mathbb{R}^d$ is a $(d,k, \Gamma)$\textit{-set}, then  for all  $s\in \Gamma$ there exists a translation  $t_s\in \mathbb{R}^d$ such that \[
t_s+\sum_{i=1}^kr_ix_i^s\in E
\]
 for all $r=(r_1,r_2,\ldots,r_k)\in[0,1]^k$.   In particular, a $(d,k)$-set is a $(d,k,G(d,k))$-set.  We aim to bound the Fourier dimension of compact $(d,k, \Gamma)$-sets in terms of $d,k$, and geometric properties of $\Gamma$.  The Fourier dimension of a set is bounded above by the Hausdorff dimension, hence lower bounds for the Fourier dimension give lower bounds for Hausdorff dimension.   A related problem was considered by Oberlin \cite{oberlin_israel}.  This paper considers sets in $\mathbb{R}^d$ containing certain families of affine hyperplanes.  If the Hausdorff dimension of the family is large enough, then it is proved in \cite[Theorem 1.3]{oberlin_israel} that the Lebesgue measure of the set must be positive.

The \textit{Fourier transform} of a Lebesgue integrable, complex-valued   function $f$ on $\mathbb{R}^d$   is the function $\hat{f} : \mathbb{R}^d \to \mathbb{C}$ given by
\[\hat{f}(\xi) = \int_{\mathbb{R}^d} f(x) e^{-2\pi i\xi\cdot x}\dd x.\]
Analogously, the \textit{Fourier transform} of a Borel  measure $\mu$ on $\mathbb{R}^d$ is the function $\hat{\mu} : \mathbb{R}^d \to \mathbb{C}$ given by
\[\hat{\mu}(\xi) = \int_{\mathbb{R}^d}   e^{-2\pi i\xi\cdot x} \dd \mu(x).\]

We write $\mathcal{M}(E)$ to denote the set of all Borel probability measures supported on a closed set $E$.  Throughout we write $A \lesssim B$ to mean there is a constant $c >0$ such that $A \leq cB$.   If the implicit constant $c$ depends on another parameter $\varepsilon$ we write $A \lesssim_\varepsilon B$.  The \textit{Fourier dimension} of a closed set $E\subseteq\mathbb{R}^d$ is then 
\[\dimf E =\sup\{0 \leq s\le d:\exists\, \mu\in\mathcal{M}(E)\text{ such that }|\hat{\mu}(\xi)| \lesssim_s |\xi|^{-s/2} \}.\]
By relating energy to the Fourier transform, it is straightforward to see that $\dimf E \leq \dimh E$, see \cite{mattila_2015}, where $\dimh$ denotes Hausdorff dimension.

There exist  sets in $\mathbb{R}^d$ with Fourier dimension $d$ but zero $d$-dimensional  Lebesgue measure. For example, Besicovitch sets in $\mathbb{R}^2$.  However, if we know that the Fourier decay of a measure $\mu\in\mathcal{M}(E)$ is ``even better'' than is required for full dimension, i.e. if there exists $\varepsilon>0$ such that
\[|\hat{\mu}(\xi)| \lesssim_\varepsilon  |\xi|^{-(d+\varepsilon)/2} \]
then $E \subseteq \mathbb{R}^d$ not only has full Fourier dimension, but also positive  $d$-dimensional  Lebesgue  measure.  Interestingly, there is no implication in the other direction.  More precisely, there exist compact subsets of  $\mathbb{R}^d$ with positive Lebesgue measure but with Fourier dimension equal to 0, see \cite[Example 18]{ekstrom}.  In particular,  none of the Fourier dimension estimates we provide in this paper are  implied by statements about positivity of Lebesgue measure.

We metrise $G(d,k)$ using the Hausdorff distance. The Hausdorff distance between two non-empty compact  subsets $A,B$ of a compact metric space $(X,d)$ is given by
\[d_H(A,B) = \max\left\{\sup_{a\in A}\inf_{b \in B}d(a,b), \ \sup_{b\in B} \inf_{a \in A}d(b,a)\right\}.\]
 The metric we use on $G(d,k)$ is then
\begin{align}
    d_{(d,k)}(s,t) = d_H(s\cap S ^{d-1}, t\cap S ^{d-1})\label{d-d-k-metric}
\end{align}
for $s,t \in G(d,k)$ and where $ S ^{d-1}$ is the unit sphere in $\mathbb{R}^d$.

\section{Results and applications}

We state our main result in terms of an abstract scaling condition, which we refer to as $\beta$-scaling.  We then apply this result in two different ways to obtain less abstract corollaries.   Given $\xi \in \mathbb{R}^d \setminus\{0\}$ and $\eta>0$, let
\[
S_{ \xi , \eta}=\{s\in G(d,k) : |\xi\cdot x_i^s|<\eta|\xi|,  \ \forall\, i = 1, \dots, k\}.
\]
Essentially, $s \in S_{\xi, \eta}$ if it is in an $\eta$-neighbourhood of the orthogonal complement of $\xi$.  We say $\Gamma \subseteq G(d,k)$ is $\beta$\emph{-scaling} for $\beta \geq 0$ if it supports a Borel measure $\gamma$ such that 
\[
\gamma(S_{ \xi, \eta}) \lesssim_\beta \eta^\beta
\]
for all $\xi \in \mathbb{R}^d \setminus\{0\}$ and $\eta>0$.

\begin{theorem}\label{grassmannian-generalisation}
Let $\Gamma\subseteq G(d,k)$ and assume $\Gamma$ is $\beta$-scaling.  Let $E\subseteq\mathbb{R}^d$ be a compact $(d,k,\Gamma)$-set.  Then
\[\dimf E \ge \min\{2\beta,d\}.\]
Moreover, if $\beta>d/2$, then $E$ has positive $d$-dimensional Lebesgue measure.  
\end{theorem}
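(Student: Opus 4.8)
The plan is to produce, for every $\alpha<\min\{2\beta,d\}$, a single measure $\mu\in\mathcal{M}(E)$ with $|\hat\mu(\xi)|\lesssim|\xi|^{-\alpha/2}$, and to upgrade this to $\hat\mu\in L^2$ when $\beta>d/2$. The natural candidate spreads mass over the unit $k$-balls supplied by the $(d,k,\Gamma)$ hypothesis and then averages against $\gamma$. For $s\in\Gamma$ let $\sigma_s$ be the normalised surface measure on the translated unit $k$-ball $t_s+\{\sum_i r_ix_i^s:|r|\le 1\}\subseteq E$, and set $\mu=\gamma(\Gamma)^{-1}\int_\Gamma\sigma_s\dd\gamma(s)$, a Borel probability measure on the compact set $E$. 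Using the ball (rather than the cube) makes the estimate orientation-free, which is why the choice of basis is immaterial.

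First I would record the Fourier-analytic reduction. Since $\sigma_s$ lives on an affine copy of $s$, its transform factorises as $\hat\sigma_s(\xi)=e^{-2\pi i\xi\cdot t_s}\,\widehat{\sigma_0}(P_s\xi)$, where $P_s$ is orthogonal projection onto $s$ and $\widehat{\sigma_0}$ is the transform of the unit ball measure in $\mathbb{R}^k$, so $|\widehat{\sigma_0}(\zeta)|\lesssim\min\{1,|\zeta|^{-(k+1)/2}\}$ depends only on $|\zeta|$. Hence
\[
\hat\mu(\xi)=\gamma(\Gamma)^{-1}\int_\Gamma e^{-2\pi i\xi\cdot t_s}\,\widehat{\sigma_0}(P_s\xi)\dd\gamma(s),
\]
and everything reduces to the distribution of $|P_s\xi|$ as $s$ ranges over $\Gamma$. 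The geometric input is that $\{s:|P_s\xi|<\eta|\xi|\}$ is comparable to $S_{\xi,\eta}$, so $\beta$-scaling gives $\gamma(\{s:|P_s\xi|<\eta|\xi|\})\lesssim_\beta\eta^\beta$ for all $\eta>0$.

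The heart of the matter is the key estimate $|\hat\mu(\xi)|\lesssim|\xi|^{-\beta}$ (losing an $\eps$ is harmless). I would decompose $\Gamma$ into the dyadic shells $S_{\xi,2^{-j}}\setminus S_{\xi,2^{-j-1}}$ on which $|P_s\xi|\sim 2^{-j}|\xi|$, bound the $j$-th shell by $\gamma\lesssim 2^{-j\beta}$, pair this with the disk decay $|\widehat{\sigma_0}(P_s\xi)|\lesssim(2^{-j}|\xi|)^{-(k+1)/2}$, and sum the geometric series in $j$; the resulting bound on $\dimf E$ is then read off as $\min\{2\beta,d\}$, the $d$ being the trivial cap $\dimf\le d$. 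Carried out with the triangle inequality alone this only yields $|\hat\mu(\xi)|\lesssim|\xi|^{-\min\{\beta,(k+1)/2\}}$, hence the weaker $\dimf E\ge\min\{2\beta,k+1\}$: the obstruction is the positive-$\gamma$-measure family of subspaces nearly containing the direction $\xi$, on which a single $k$-ball oscillates only at rate $(k+1)/2$. \textbf{This is precisely where the main difficulty lies.} To replace the disk exponent $(k+1)/2$ by the sharp ambient exponent $d/2$ one cannot estimate $|\hat\mu|$ termwise; one must exploit the cancellation created by the transversality of distinct subspaces. I expect the efficient route is to pass to the $L^2$ side and estimate the shell integrals $\int_{|\xi|\sim R}|\hat\mu(\xi)|^2\dd\xi$ via the double average $\int_\Gamma\int_\Gamma(\cdots)\dd\gamma(s)\dd\gamma(s')$, where off-diagonal pairs are governed by the Minkowski difference of the two balls — spread out exactly when $s,s'$ are transverse — and near-parallel pairs are controlled by $\beta$-scaling. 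Quantifying this transversality in $G(d,k)$ uniformly in $\xi$, and then returning from $L^2$/spherical-average information to the pointwise decay demanded by $\dimf$, is the step I anticipate being hardest.

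Finally, the positive-measure statement should fall out of the same $L^2$ estimate. When $\beta>d/2$ the exponent $2\beta$ exceeds $d$, so the shell bounds $\int_{|\xi|\sim R}|\hat\mu|^2\lesssim R^{\,d-2\beta}$ are summable over dyadic $R$ and give $\hat\mu\in L^2(\mathbb{R}^d)$. By Plancherel $\mu$ is then absolutely continuous with density $f=\dd\mu/\dd\mathcal{L}^d\in L^2$ supported on $E$; since $\int f\dd\mathcal{L}^d=1$, Cauchy--Schwarz gives $\mathcal{L}^d(E)\ge\|f\|_2^{-2}>0$, where $\mathcal{L}^d$ denotes $d$-dimensional Lebesgue measure.
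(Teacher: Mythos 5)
Your proposal contains a genuine gap at exactly the point you flag as ``where the main difficulty lies,'' and the difficulty you describe is in fact a self-inflicted one. By putting normalised Lebesgue measure $\sigma_s$ on each $k$-ball, you lock yourself into the Bessel-type decay $|\widehat{\sigma_0}(\zeta)|\lesssim|\zeta|^{-(k+1)/2}$, and then correctly observe that the triangle inequality only yields $\dimf E\ge\min\{2\beta,k+1\}$. But the cure is not cancellation, transversality, or an $L^2$ double-average argument --- it is to replace $\sigma_s$ by a \emph{smooth} density: put $\phi(r_1)\cdots\phi(r_k)\dd r$ on each $k$-cube, where $\phi\in C_c^\infty$ with $\spt\,\phi\subseteq[0,1]$ and $\int\phi=1$. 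Then the transform of $\mu$ factorises through $\prod_{i=1}^k|\hat\phi(\xi\cdot x_i^s)|$, and since $\hat\phi$ is Schwartz, \emph{off} $S_{\xi,\eta}$ one factor is $\lesssim_N(\eta|\xi|)^{-N}$ for every $N$, while \emph{on} $S_{\xi,\eta}$ the $\beta$-scaling hypothesis gives $\gamma(S_{\xi,\eta})\lesssim\eta^\beta$. Choosing $\eta=|\xi|^{-\alpha}$ with $\alpha<1$ and $N$ large gives $|\hat\mu(\xi)|\lesssim_\alpha|\xi|^{-\alpha\beta}$, which is all the theorem needs: the $d$ in $\min\{2\beta,d\}$ is only the trivial cap in the definition of Fourier dimension, not a decay rate the measure must attain, and when $\beta>d/2$ the decay $|\xi|^{-\alpha\beta}$ with $2\alpha\beta>d$ already puts $\hat\mu$ in $L^2$, giving positive Lebesgue measure exactly as in your final paragraph. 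So the whole argument is a termwise triangle-inequality estimate after all; no off-diagonal analysis is required, and the $\beta$ limitation comes solely from the degenerate region $S_{\xi,\eta}$.

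Beyond being incomplete, your proposed repair route has a structural problem you partly acknowledge: Fourier dimension demands \emph{pointwise} decay of $\hat\mu$, and bounds on spherical $L^2$ averages $\int_{|\xi|\sim R}|\hat\mu|^2$ do not in general upgrade to pointwise bounds (average decay controls energy/Hausdorff-type quantities, not Fourier dimension). So even a successful transversality analysis of the double integral would not, by itself, finish the proof of the decay statement --- though it would suffice for the positive-measure claim. Two smaller points: your $\sigma_s$ lives on the ball $\{\sum_i r_ix_i^s:|r|\le1\}$, which is not guaranteed to lie in $E$ (the definition only provides the cube over $[0,1]^k$; rescale to a ball inside it), and the definition of $\mu$ tacitly assumes $s\mapsto t_s$ is measurable --- this needs an argument, e.g.\ approximating $\gamma$ by finitely supported measures $\gamma_n$ on a maximal $(1/n)$-separated subset of $\Gamma$, proving the uniform decay bound for the resulting $\mu_n$, and passing to a weak limit.
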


We defer the proof of Theorem \ref{grassmannian-generalisation} to Section \ref{proof1}. Our first application of Theorem \ref{grassmannian-generalisation} is to bound the Fourier dimension of $(d,k,\Gamma)$-sets from below in terms of the Hausdorff dimension of $\Gamma$. The next result guarantees that $\Gamma$ is $\beta$-scaling if its Hausdorff dimension is sufficiently large.

\begin{proposition} \label{scaling}
Let $\Gamma\subseteq G(d,k)$ and suppose $k(d-1-k)<b<\dimh \Gamma$.  Then $\Gamma$ is $(b-k(d-1-k))$-scaling. 
\end{proposition}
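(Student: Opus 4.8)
The plan is to produce a Frostman measure on $\Gamma$ at dimension $b$ and then control its mass on each set $S_{\xi,\eta}$ by a covering argument, exploiting the fact that $S_{\xi,\eta}$ collapses onto a lower-dimensional submanifold of $G(d,k)$ as $\eta \to 0$. Since $\dimh \Gamma > b$, Frostman's lemma (see \cite{mattila_1995}) furnishes a nonzero finite Borel measure $\gamma$ supported on $\Gamma$ satisfying the ball condition $\gamma(B(s,r)) \lesssim r^b$ for all $s \in G(d,k)$ and all $r>0$, where $B(s,r)$ denotes the ball of radius $r$ in the metric $d_{(d,k)}$. This $\gamma$ is the candidate measure witnessing $\beta$-scaling with $\beta = b - k(d-1-k)$, so it remains to bound $\gamma(S_{\xi,\eta})$ by a constant times $\eta^\beta$.

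The geometric heart of the argument is to identify $S_{\xi,\eta}$, for small $\eta$, as a thin neighbourhood of the set of $k$-planes orthogonal to $\xi$. Fix $\xi$ with $|\xi|=1$, write $V = \xi^\perp$, and let $G_V = \{s \in G(d,k) : s \subseteq V\}$, which is naturally identified with $G(d-1,k)$ and hence is a compact submanifold of dimension $k(d-1-k)$. Given $s \in S_{\xi,\eta}$ with orthonormal basis $x_1^s,\dots,x_k^s$, I project onto $V$ by setting $y_i = x_i^s - (\xi \cdot x_i^s)\xi$, so that $|y_i - x_i^s| = |\xi \cdot x_i^s| < \eta$. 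For small $\eta$ the Gram matrix of the $y_i$ is a small perturbation of the identity, so the $y_i$ span a $k$-plane $t \in G_V$, and comparing the (orthonormalised) bases shows $d_{(d,k)}(s,t) \lesssim \eta$. Thus $S_{\xi,\eta}$ is contained in a $C\eta$-neighbourhood of $G_V$ for a constant $C$ depending only on $d$ and $k$.

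It then suffices to cover this neighbourhood efficiently. Since $G_V$ is a compact manifold of dimension $m := k(d-1-k)$, it can be covered by $\lesssim \eta^{-m}$ balls $B(s_j,\eta)$, and the implied constant is uniform in $\xi$ because the submanifolds $G_V$ are orthogonal images of a single copy of $G(d-1,k)$ and the metric $d_{(d,k)}$ is invariant under the orthogonal group. Enlarging the radius, the $C\eta$-neighbourhood of $G_V$ is covered by the balls $B(s_j,(C+1)\eta)$, so applying the Frostman bound to each and summing gives
\[
\gamma(S_{\xi,\eta}) \;\le\; \sum_j \gamma\big(B(s_j,(C+1)\eta)\big) \;\lesssim\; \eta^{-m}\cdot \eta^{b} \;=\; \eta^{\,b-k(d-1-k)}.
\]
For $\eta$ bounded below the estimate is trivial since $\gamma$ is finite, so this establishes the $\beta$-scaling bound with $\beta = b-k(d-1-k)$, whose positivity comes from the hypothesis $b > k(d-1-k)$. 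I expect the main obstacle to be the second paragraph: verifying cleanly that the defining condition $|\xi\cdot x_i^s|<\eta|\xi|$ forces $s$ into an $O(\eta)$-neighbourhood of $G_V$ in the metric $d_{(d,k)}$, with all implied constants depending only on $d$ and $k$ and not on $\xi$, together with the matching uniform covering number $\eta^{-m}$ for $G_V$.
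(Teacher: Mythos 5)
Your proposal is correct and follows essentially the same route as the paper: a Frostman measure at exponent $b$, the observation that $S_{\xi,\eta}$ lies in a $C\eta$-neighbourhood of the sub-Grassmannian $G_\xi \cong G(d-1,k)$ of $k$-planes inside $\xi^\perp$, a covering of that neighbourhood by $\lesssim \eta^{-k(d-1-k)}$ balls of radius $(C+1)\eta$, and summation of the Frostman bound over the cover. In fact you supply more detail than the paper on the containment step (the projection and Gram-matrix perturbation argument), which the paper dismisses as following ``straight from the definition.''
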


We defer the proof of Theorem \ref{scaling} to Section \ref{proof2}. Combining Theorem \ref{grassmannian-generalisation} and Proposition \ref{scaling} we immediately get the following result.

\begin{corollary}\label{dimensionthm}
Let $\Gamma\subseteq G(d,k)$ and  $E\subseteq\mathbb{R}^d$ be a compact $(d,k,\Gamma)$-set.  If $2(\dimh \Gamma -k(d-1-k)) \leq d$, then 
\[\dimf E \ge  2(\dimh \Gamma -k(d-1-k)).\]
Otherwise, if $2(\dimh \Gamma -k(d-1-k)) > d$, then $\dimf E=d$ and  $E$ has positive $d$-dimensional Lebesgue measure.  
\end{corollary}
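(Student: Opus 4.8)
The plan is to chain Proposition \ref{scaling} into Theorem \ref{grassmannian-generalisation} and then take a limit across the scaling exponent. Write $\alpha = \dimh \Gamma - k(d-1-k)$ for the quantity appearing in the statement. If $\alpha \le 0$ the asserted bound $\dimf E \ge 2\alpha$ holds vacuously, since Fourier dimension is always nonnegative, so I would immediately reduce to the case $\alpha>0$, i.e. $\dimh \Gamma > k(d-1-k)$. The crucial point is that Proposition \ref{scaling} only supplies $\beta$-scaling for exponents strictly below $\alpha$, because it requires $b < \dimh \Gamma$. First, then, I would fix an arbitrary $\beta \in (0,\alpha)$ and set $b = \beta + k(d-1-k)$, so that $k(d-1-k) < b < \dimh \Gamma$; Proposition \ref{scaling} shows $\Gamma$ is $\beta$-scaling, and feeding this into Theorem \ref{grassmannian-generalisation} yields $\dimf E \ge \min\{2\beta,d\}$ for every $\beta \in (0,\alpha)$.

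It remains to optimise this family of bounds in the two regimes. In the first regime, $2\alpha \le d$, every $\beta \in (0,\alpha)$ satisfies $2\beta \le d$, so the estimate reads $\dimf E \ge 2\beta$; since $\dimf E$ is a single number dominating $2\beta$ for all such $\beta$, letting $\beta \uparrow \alpha$ (equivalently, taking the supremum) gives $\dimf E \ge 2\alpha$. In the second regime, $2\alpha > d$, I would instead select one exponent $\beta$ with $d/2 < \beta < \alpha$, which exists precisely because $\alpha > d/2$. For this $\beta$ we have $\min\{2\beta,d\}=d$, hence $\dimf E \ge d$, and combined with the universal bound $\dimf E \le \dimh E \le d$ this forces $\dimf E = d$. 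Since $\beta > d/2$, the ``moreover'' clause of Theorem \ref{grassmannian-generalisation} simultaneously delivers positive $d$-dimensional Lebesgue measure for $E$.

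The only step demanding any care, and the nearest thing to an obstacle, is the passage across the strict inequality $b < \dimh \Gamma$ in Proposition \ref{scaling}: one cannot take $b = \dimh \Gamma$ directly, so the borderline value $\beta = \alpha$ must be reached through a supremum rather than a single application. Everything else is a direct substitution into the two quoted results.
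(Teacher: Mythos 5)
Your proposal is correct and is exactly the argument the paper intends: the corollary is stated as an immediate combination of Proposition \ref{scaling} and Theorem \ref{grassmannian-generalisation}, and your supremum over $\beta < \dimh\Gamma - k(d-1-k)$ is precisely the routine detail needed to pass from the strict inequality $b < \dimh\Gamma$ in the proposition to the closed bound in the corollary. The case analysis ($2\alpha \le d$ versus $2\alpha > d$, plus the vacuous case $\alpha \le 0$) matches the statement's structure, so there is nothing to add.
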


The special case of Corollary \ref{dimensionthm} when $d=2>1=k$ was proved in \cite{oberlin}.  The final conclusion of Corollary \ref{dimensionthm} giving conditions guaranteeing positive measure can be deduced from \cite[Theorem 1.3]{oberlin_israel}, but our Fourier dimension estimates are new to the best of our knowledge.   Corollary \ref{dimensionthm} is sharp in the sense that for arbitrary $d>k$ there exist $(d,k,\Gamma)$-sets $E$ with $\dimh \Gamma = k(d-1-k)$ and $\dimf E = 0$.  Such sets can be constructed by choosing $V \in G(d,d-1)$ and then  $\Gamma$  to be all $k$-dimensional subspaces of $\mathbb{R}^d$ which lie in $V$.  With this choice of $\Gamma$ in place, $V \cap B(0,1)$  is a compact $(d,k,\Gamma)$-set and has  Fourier dimension 0 since any subset of a hyperplane always has  Fourier dimension 0.

Specialising to $(d,k)$-sets, we recover Falconer's result for $k>d/2$ and obtain new Fourier dimension bounds.

\begin{corollary}\label{dimensionthm2}
Let   $E\subseteq\mathbb{R}^d$ be a compact $(d,k)$-set.  If $k  \leq d/2$, then 
\[\dimf E \ge  2k.\]
Otherwise, if $k> d/2$, then $\dimf E=d$ and $E$ has positive $d$-dimensional Lebesgue measure.  
\end{corollary}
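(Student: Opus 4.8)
The plan is to specialise Corollary \ref{dimensionthm} to the single choice $\Gamma = G(d,k)$. By definition a $(d,k)$-set is precisely a $(d,k,G(d,k))$-set, so Corollary \ref{dimensionthm} applies verbatim to $E$ as soon as we identify $\dimh G(d,k)$. The entire argument is therefore a substitution into the threshold already established there.

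First I would record that $\dimh G(d,k) = k(d-k)$. The excerpt notes that $G(d,k)$ is a smooth compact manifold of dimension $k(d-k)$, and since the Hausdorff dimension of a smooth compact manifold agrees with its topological dimension under any smooth (in particular bi-Lipschitz-locally-Euclidean) metric, the metric $d_{(d,k)}$ from \eqref{d-d-k-metric} yields $\dimh G(d,k) = k(d-k)$. This is the only genuinely non-arithmetic input, and it is standard.

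Next I would compute the quantity governing Corollary \ref{dimensionthm}. With $\Gamma = G(d,k)$ we have
\[
\dimh \Gamma - k(d-1-k) = k(d-k) - k(d-1-k) = k,
\]
so that $2(\dimh \Gamma - k(d-1-k)) = 2k$. The dichotomy $2(\dimh \Gamma - k(d-1-k)) \leq d$ versus $> d$ of Corollary \ref{dimensionthm} then becomes $2k \leq d$ versus $2k > d$, that is, $k \leq d/2$ versus $k > d/2$.

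Finally I would invoke Corollary \ref{dimensionthm} directly in each case. When $k \leq d/2$ it gives $\dimf E \geq 2(\dimh \Gamma - k(d-1-k)) = 2k$, and when $k > d/2$ it gives $\dimf E = d$ together with positive $d$-dimensional Lebesgue measure. These are exactly the two conclusions claimed, so the corollary follows. There is no real obstacle here beyond the dimension identification in the second paragraph; the rest is the arithmetic simplification $k(d-k)-k(d-1-k)=k$.
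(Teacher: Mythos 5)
Your proposal is correct and is essentially the paper's own (implicit) argument: the paper obtains Corollary \ref{dimensionthm2} precisely by specialising Corollary \ref{dimensionthm} to $\Gamma = G(d,k)$, using that $G(d,k)$ has dimension $k(d-k)$ so that $\dimh \Gamma - k(d-1-k) = k$. Your identification of $\dimh G(d,k)$ via the smooth compact manifold structure is the same standard fact the paper relies on (and uses again in the covering bound of Proposition \ref{scaling}), so there is nothing missing.
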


To motivate these results, briefly consider compact $(3,2)$-sets which are known to have positive 3-dimensional Lebesgue measure.   Corollary \ref{dimensionthm} provides a strengthening of this result by giving the same conclusion with only the requirement that the set of orientations has Hausdorff dimension $>3/2$. Recall that the dimension of the set of available orientations is 2.   However, a further improvement is possible using \cite[Theorem 1.3]{oberlin_israel}.  From this result it may be shown that the conclusion of positive measure can be reached with a set of orientations of dimensions $>1$.  Moreover, this is sharp since if $\dimh \Gamma = 1$, then  $(3,2, \Gamma)$-sets can have zero 3-dimensional Lebesgue measure.  For example, consider the product of a Besicovitch set $E' \in \mathbb{R}^2$  (that is, a Kakeya set with zero 2-dimensional measure) with a unit line segment.  By disintegration of 3-dimensional Lebesgue measure we see that $E=E' \times [0,1]$ is a null set and, moreover, is a $(3,2, \Gamma)$ with $\dimh \Gamma = 1$.  Higher dimensional versions of this construction are also possible but left to the reader.

Next we point out that Theorem \ref{grassmannian-generalisation} often gives an improvement over  Corollary \ref{dimensionthm} if there is additional geometric information known about $\Gamma$.  There are many examples possible here and so we just highlight some of our favourites.  Generally, better estimates will be possible for sets  $\Gamma$ which stay sufficiently far away from hyperplanes.

Let $d \geq 3$ and identify $G(d,1)$ and $S^{d-1}$ in the natural way.  We say a set $\Gamma \subseteq G(d,1)$ is a \emph{non-degenerate sphere} if it is a $(d-2)$-dimensional sphere with diameter strictly less than 2, that is, it is not the intersection of $S^{d-1}$ with a hyperplane. 

\begin{proposition} \label{spheres}
Let $\Gamma \subseteq G(d,1)$ be a \emph{non-degenerate sphere}.   If $d=3$, then $\Gamma$ is $1/2$-scaling. If $d \geq 4$, then $\Gamma$ is $1$-scaling.
\end{proposition}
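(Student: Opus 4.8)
The plan is to take $\gamma$ to be the normalised $(d-2)$-dimensional surface measure on $\Gamma$ and to estimate $\gamma(S_{\xi,\eta})$ directly. After a rotation we may assume $\Gamma=\{x\in S^{d-1}:x\cdot v=c\}$ for a fixed unit vector $v$ and a fixed $c$ with $0<|c|<1$; here non-degeneracy is exactly the condition $c\neq 0$, and $\Gamma$ is the round $(d-2)$-sphere of radius $b:=\sqrt{1-c^2}$ centred at $cv$ inside the hyperplane $\{x\cdot v=c\}$. Writing a point of $\Gamma$ as $x=cv+b\omega$ with $\omega\in S^{d-2}\subseteq v^\perp$, and decomposing the unit vector $\hat\xi=\xi/|\xi|$ as $\hat\xi=av+\hat\xi_\perp$ with $a=\hat\xi\cdot v$ and $|\hat\xi_\perp|=\sqrt{1-a^2}$, a direct computation gives
\[
\hat\xi\cdot x=ca+b\sqrt{1-a^2}\,(u\cdot\omega),
\]
where $u=\hat\xi_\perp/|\hat\xi_\perp|$ when $\hat\xi_\perp\neq 0$ (the case $\hat\xi_\perp=0$ is immediate). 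Thus $S_{\xi,\eta}$ meets $\Gamma$ in the set where $|ca+rt|<\eta$, with $r:=b\sqrt{1-a^2}$ and $t:=u\cdot\omega\in[-1,1]$.

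The second step is to push $\gamma$ forward under $\omega\mapsto t=u\cdot\omega$. This is the classical marginal of the uniform measure on $S^{d-2}$, whose density on $[-1,1]$ is a constant $c_d$ times $(1-t^2)^{(d-4)/2}$. Hence
\[
\gamma(S_{\xi,\eta})=c_d\int_{\{t\in[-1,1]\,:\,|ca+rt|<\eta\}}(1-t^2)^{(d-4)/2}\dd t,
\]
and the whole problem reduces to bounding this one-dimensional integral uniformly in $a$ (equivalently in $r$) and in $\eta$. The substitution $\lambda=ca+rt$ turns it into $c_d\,r^{-(d-3)}\int(r^2-(\lambda-ca)^2)^{(d-4)/2}\dd\lambda$ taken over $\lambda\in(-\eta,\eta)\cap[ca-r,ca+r]$, which exposes the two competing effects: the $\lambda$-window has length at most $2\eta$, while the integrand is singular (when $d=3$) or merely degenerate (when $d\ge 4$) at the endpoints $\lambda=ca\pm r$, that is, precisely at the extrema of $\hat\xi\cdot x$ on $\Gamma$.

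The crucial observation, and the point at which non-degeneracy enters, is a dichotomy for small $\eta$. For the integral to be non-zero the range $[ca-r,ca+r]$ must meet $(-\eta,\eta)$, which forces $|ca|<r+\eta$. If $r$ is small then $\sqrt{1-a^2}$ is small, since $b$ is a fixed positive constant, so $|a|$ is close to $1$ and $|ca|$ is close to $|c|>0$; comparing with $|ca|<r+\eta$ shows that once $\eta$ and $r$ drop below a threshold depending only on $c$ the set is empty. Consequently, whenever $\gamma(S_{\xi,\eta})\neq 0$ and $\eta$ is small we automatically have $r\gtrsim_c 1$.

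On this regime I finish by estimating the integral. When $0$ is an interior point of the range the integrand is $\lesssim_c 1$ and the window has length $\lesssim\eta$, giving a contribution $\lesssim_c\eta$. When $0$ lies within distance $\lesssim\eta$ of an endpoint $ca\pm r$ I substitute $\mu=r\mp(\lambda-ca)$, so that $r^2-(\lambda-ca)^2\asymp r\mu$, and integrate $(r\mu)^{(d-4)/2}$ over an interval $(0,\mu_0)$ with $\mu_0\lesssim\eta$; this yields $\lesssim_c(\eta/r)^{(d-2)/2}\lesssim_c\eta^{(d-2)/2}$. Since $(d-2)/2=1/2$ when $d=3$ and $(d-2)/2\ge 1$ when $d\ge 4$, taking the larger of the interior and endpoint contributions gives $\gamma(S_{\xi,\eta})\lesssim_c\eta^{1/2}$ for $d=3$ and $\gamma(S_{\xi,\eta})\lesssim_c\eta$ for $d\ge 4$, while the remaining range $\eta\gtrsim_c 1$ is trivial as $\gamma$ is a probability measure. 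The main obstacle is this endpoint analysis together with the dichotomy: one must recognise that the degeneracy of the pushforward density at $t=\pm 1$ reflects the vanishing of the tangential gradient of $x\mapsto\hat\xi\cdot x$ at the two extremal points of $\Gamma$, and that non-degeneracy ($c\neq 0$) is exactly what prevents these extrema from sitting near $0$ unless $r$ is bounded below — for the equator ($c=0$) this fails and no positive scaling exponent survives.
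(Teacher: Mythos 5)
Your proof is correct, and it reaches the two exponents by a genuinely more computational route than the paper's. The paper also takes $\gamma$ to be the normalised spherical measure on $\Gamma$, but then argues geometrically rather than analytically: it views $S_{\xi,\eta}\cap\Gamma$ as the intersection of the sphere $\Gamma$ with an $\eta$-slab around the hyperplane $\xi^{\perp}$, introduces a tangency parameter $r$ (the minimal distance from the centre of the intersection sphere $\Gamma\cap\xi^{\perp}$ to $\Gamma$), and covers: in the near-tangential regime $r\leq 3\eta$ the set lies in a single ball of radius $\lesssim\sqrt{\eta}$, giving $\gamma(S_{\xi,\eta})\lesssim\eta^{(d-2)/2}$, while for $r>3\eta$ it lies in an $\eta/\sqrt{r}$-thickening of a $(d-3)$-sphere of diameter $\lesssim\sqrt{r}$, which is covered by $\lesssim\eta^{3-d}r^{d/2-2}$ balls of measure $\lesssim\eta^{d-2}$ each, giving $\lesssim\eta\, r^{d/2-2}$; combining the two cases yields the exponent $\min\{1,d/2-1\}$. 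Your case split (window near an endpoint $ca\pm r$ versus window in the interior) is exactly the analytic shadow of the paper's tangential/non-tangential dichotomy, and the endpoint contribution $(\eta/r)^{(d-2)/2}$ you extract is the same square-root curvature gain. What your route buys: the reduction to the explicit one-dimensional marginal $(1-t^2)^{(d-4)/2}$ makes all constants computable and, more importantly, makes the role of non-degeneracy completely explicit --- your dichotomy (non-emptiness of $S_{\xi,\eta}\cap\Gamma$ with $\eta$ small forces $r=b\sqrt{1-a^2}\gtrsim_c 1$) is precisely the statement that a non-degenerate sphere cannot be close to any hyperplane through the origin. The paper uses this only implicitly: its claim that $\Gamma\cap\xi^{\perp}$ is a $(d-3)$-sphere, and the covering bounds built on it, silently fail for the equator (take $\xi=v$, so that $\Gamma\subseteq\xi^{\perp}$ and $S_{\xi,\eta}\supseteq\Gamma$). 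What the paper's route buys: it is shorter, coordinate-free, and its covering formulation would adapt more readily to sets of directions that are uniformly curved hypersurfaces rather than round spheres, for which no closed-form marginal density is available.

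One presentational point you should tighten: for $d=3$ your interior case as stated (``the integrand is $\lesssim_c 1$'') is not literally true when $0$ is interior to $[ca-r,ca+r]$ but at distance between $\eta$ and a constant from an endpoint, since the density $(r^2-(\lambda-ca)^2)^{-1/2}$ blows up there; as written your two cases leave this middle regime uncovered. This is not a genuine gap: for $d=3$ the integrand $(r\mu)^{-1/2}$ in your endpoint variable $\mu$ is decreasing, so translating the $\lambda$-window so that it abuts the endpoint only increases the integral, and your endpoint computation then bounds \emph{every} non-empty configuration by $\lesssim_c(\eta/r)^{1/2}\lesssim_c\eta^{1/2}$; for $d\geq 4$ the integrand is bounded by $r^{d-4}$, so the interior estimate alone covers every case with the bound $\lesssim_c\eta$. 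Adding one of these remarks closes the argument completely.
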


We defer the proof of Proposition  \ref{spheres} to Section \ref{proof3}. As a consequence, we get the following result in the setting of restricted Kakeya sets.  This is a strict improvement over Corollary \ref{dimensionthm} which does not give any non-trivial lower bounds in this setting.

\begin{corollary}\label{restricted} 
Let $E \subseteq \mathbb{R}^d$ be a compact set containing a unit line segment in a non-degenerate sphere of directions.  If $ d =3$, then
\[
 \dimf E \geq 1.
\]
If $ d \geq 4$, then
\[
 \dimf E \geq 2.
\]
\end{corollary}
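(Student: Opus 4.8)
The plan is to recognise that the hypothesis of the corollary places us exactly in the $(d,1,\Gamma)$-set framework of Theorem \ref{grassmannian-generalisation}, with $\Gamma$ a non-degenerate sphere, so that the result follows by feeding the scaling exponents supplied by Proposition \ref{spheres} into the Fourier dimension bound.

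First I would make the identification explicit. Under the natural identification of $G(d,1)$ with $S^{d-1}$, a ``non-degenerate sphere of directions'' is precisely a non-degenerate sphere $\Gamma \subseteq G(d,1)$ in the sense defined immediately before Proposition \ref{spheres}. Since $k=1$ here, a compact $(d,1,\Gamma)$-set is a compact set which, for each direction $s \in \Gamma$, contains a translate of the unit segment $\{r\, x_1^s : r \in [0,1]\}$; this is exactly the stated hypothesis that $E$ contains a unit line segment in every direction of the sphere $\Gamma$. Hence $E$ is a compact $(d,1,\Gamma)$-set and Theorem \ref{grassmannian-generalisation} is available to us.

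Next I would invoke Proposition \ref{spheres} to supply the scaling exponent and substitute. When $d=3$ it gives that $\Gamma$ is $\tfrac12$-scaling, so Theorem \ref{grassmannian-generalisation} applies with $\beta = \tfrac12$ and yields $\dimf E \ge \min\{2\cdot\tfrac12,\, 3\} = 1$. When $d \ge 4$ it gives that $\Gamma$ is $1$-scaling, so Theorem \ref{grassmannian-generalisation} applies with $\beta = 1$ and yields $\dimf E \ge \min\{2,\, d\} = 2$, using $d \ge 4 > 2$. This establishes both of the claimed bounds.

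I expect no genuine obstacle in the corollary itself: it is a direct substitution once the two deferred results are in hand. The only point requiring (minor) care is the book-keeping of the $k=1$ specialisation, together with confirming that in each regime the minimum in Theorem \ref{grassmannian-generalisation} is attained by the $2\beta$ term rather than by $d$; this is immediate from $1 \le 3$ and $2 \le d$. The substance of the argument lies entirely in Proposition \ref{spheres}, whose proof---estimating $\gamma(S_{\xi,\eta})$ for the natural surface measure $\gamma$ on a non-degenerate sphere and exploiting that its non-degeneracy keeps it uniformly away from any hyperplane---is the real work and is deferred to Section \ref{proof3}.
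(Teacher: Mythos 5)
Your proposal is correct and matches the paper's own derivation exactly: Corollary \ref{restricted} is obtained there precisely by viewing $E$ as a compact $(d,1,\Gamma)$-set, feeding the scaling exponents $\beta = 1/2$ (for $d=3$) and $\beta = 1$ (for $d \geq 4$) from Proposition \ref{spheres} into the bound $\dimf E \geq \min\{2\beta, d\}$ of Theorem \ref{grassmannian-generalisation}. The substantive work does indeed lie in Proposition \ref{spheres}, which both you and the paper defer.
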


Corollary \ref{restricted} is sharp  for $d=3,4$.  This can be seen by following result about cones, which may be of interest in its own right. 

\begin{thm} \label{cones}
For $d \geq 1$, the cone
\[
\mathcal{C}^d =\left\{ (\xi_1, \dotsc, \xi_d, \xi_{d+1}) \in \mathbb{R}^{d+1}:  \lvert (\xi_1, \dotsc, \xi_d ) \rvert = \vert \xi_{d+1} \rvert \right\} 
\]
in $\mathbb{R}^{d+1}$ has Fourier dimension $d-1$.
\end{thm}
We defer the proof of Theorem  \ref{cones} to Section \ref{proof4}.

It is perhaps noteworthy that in the case $d \geq 4$ in the above, we get the same lower bound for the Fourier dimension as that for genuine Kakeya sets, where lines in every direction are present, not just in a non-degenerate sphere of directions, see Corollary \ref{dimensionthm2}. Moreover, the non-degeneracy condition is necessary in the above, since a hyperplane contains a unit line segment in a $(d-2)$-dimensional sphere of directions, but has Fourier dimension 0.

We note that Corollary \ref{restricted} gives non-trivial lower bounds for the Hausdorff dimension of restricted Kakeya sets $E$ in the case $d \geq 4$.  However, these bounds can be improved in all dimensions by noting that the orthogonal projection of such $E$ onto the hyperplane orthogonal to the subspace spanned by the centre of $\Gamma$ is a genuine Kakeya set living in ambient dimension $d-1$.  Then, since Hausdorff dimension cannot increase under projection, one can bound $\dimh E$ from below by applying the state of the art estimates for the Kakeya problem.  We observe that Fourier dimension can (rather easily) increase under projection, and so such a reduction is not possible for Fourier dimension.

\section{Proof of Theorem \ref{grassmannian-generalisation}} \label{proof1}

Write  $C_c^\infty(\mathbb{R})$ for the space of infinitely differentiable functions with compact support. Let  $\phi \in C_c^\infty(\mathbb{R})$ be nonnegative with support  $\spt\, \phi \subseteq [0,1]$ and such that $\int_0^1\phi(x)\dd x=1$. Observe that the Fourier transform of $\phi$ is bounded above by $1$ since 
\[|\hat{\phi}(\xi)|\le\left|\int_{0}^{1}\phi(x)e^{-2\pi i \xi x}\dd x \right|\le \int_{0}^{1}\phi(x)\left|e^{-2\pi i \xi x}\right|\dd x =1.\]
Moreover,  $\phi$ is an element of the Schwartz space $\mathcal{S}(\mathbb{R})$, and so $|\hat{\phi}(\xi)|$ decays rapidly as $|\xi| \to \infty$, see \cite[Chapter 3]{mattila_2015}. The following proof broadly follows Oberlin \cite[Proposition 2]{oberlin}, see also the exposition \cite[Theorem 11.3]{mattila_2015}.  Our main new idea is that, when considering balls in $\mathbb{R}^d$ instead of lines in $\mathbb{R}^2$, one needs more understanding of the geometry of $\Gamma$, which is achieved via $\beta$-scaling.  Moreover, we use a natural extension of Oberlin's argument to higher dimensions which requires multiple uses of the Schwartz function $\phi$.

\begin{proof}[Proof of Theorem \ref{grassmannian-generalisation}]
Let $\gamma$ be the Borel measure supported on $E$ coming from the definition of $\beta$-scaling.  Assume for now that the map $s\mapsto t_s$ coming from the definition of $E$ is measurable. We address the issue of measurability  at the end of the proof using a standard discretisation approach.  Since $s\mapsto t_s$ is measurable we can use the Riesz representation theorem, see \cite[2.14 Theorem]{rudin}, to define a measure $\mu\in\mathcal{M}(E)$ by
\begin{align}\int_{E}f\dd \mu=\int_{G(d,k)}\int_{[0,1]^k}f\left(t_s+\sum_{i=1}^kr_ix_i^s\right)\phi(r_1)\cdots\phi(r_k)\dd r \dd \gamma(s)\label{mu-integral-general}\end{align}
for continuous functions $f$ on $\mathbb{R}^d$. 

Let $\xi\in\mathbb{R}^d$ with $|\xi|>1$. The Fourier transform of $\mu$ at $\xi\in\mathbb{R}^d$ is given by
\begin{align*}
    \hat{\mu}(\xi) &= \int_{G(d,k)}\int_{[0,1]^k}e^{-2\pi i \left(t_s+\sum_{i=1}^kr_ix_i^s\right) \cdot \xi}\phi(r_1)\cdots\phi(r_k)\dd r\dd \gamma(s).
\end{align*}
First we integrate out the translations by
\[\int_{[0,1]^k}e^{-2\pi i \left(t_s+\sum_{i=1}^kr_ix_i^s\right) \cdot \xi}\phi(r_1)\cdots\phi(r_k)\dd r = e^{-2\pi i t_s\cdot \xi} \prod_{i=1}^{k}\int_0^1e^{-2\pi i r_i x_i^s\cdot \xi}\phi(r_i)\dd r_i.\]
Therefore \[\left|e^{-2\pi i t_s\cdot \xi} \prod_{i=1}^{k}\int_0^1e^{-2\pi i r_i x_i^s\cdot \xi}\phi(r_i)\dd r_i\right| = \prod_{i=1}^{k} |\hat{\phi}(\xi\cdot x_i^s)|,\] and 
\[|\hat{\mu}(\xi)| \le \int_{G(d,k)} \prod_{i=1}^{k} |\hat{\phi}(\xi\cdot x_i^s)|\dd \gamma(s).\]

We  split the above integral into two pieces which are then bounded separately. Let $\eta>0$ and recall  the sets
\[
S_{\xi,\eta}=\{s\in G(d,k) : |\xi\cdot x_i^s|<\eta|\xi|, \ \forall\, i = 1, \dots, k\}
\]
used to define $\beta$-scaling.   Notice that if $s\not\in S_{\xi,\eta}$ then there is some $j\in\{1,\ldots,k\}$ such that $|\xi\cdot x_j^s|\ge \eta |\xi|$. We then use the fact that $\hat{\phi}$ is bounded above by $1$ and rapidly decreasing to conclude that, for any $N>1$, there exists a constant $C_N$ such that
\[
|\hat{\phi}(\xi\cdot x_j^s)| \le \frac{C_N}{|\xi\cdot x_j^s|^N}.
\]
 We then have
\begin{align*}
    \int_{G(d,k)\backslash S_{\xi,\eta}} \ \prod_{i=1}^{k} |\hat{\phi}(\xi\cdot x_i^s)| \dd \gamma(s) &\le \int_{G(d,k)\backslash S_{\xi,\eta}}  |\hat{\phi}(\xi\cdot x_j^s)|  \dd \gamma(s)\\
    &\le \int_{G(d,k)\backslash S_{\xi,\eta}}  \frac{C_N}{|\xi\cdot x_j^s|^N} \dd \gamma(s)\\
    &\lesssim_N (\eta|\xi|)^{-N}.
\end{align*}
On the other hand, we have the simple estimate 
\begin{align*}
    \int_{S_{\xi,\eta}} \  \prod_{i=1}^{k} |\hat{\phi}(\xi\cdot x_i^s)| \dd \gamma(s) &\le \int_{S_{\xi,\eta}}  \dd \gamma(s) = \gamma(S_{\xi,\eta}) \lesssim \eta^\beta
\end{align*}
where the final inequality is the only place where we use the $\beta$-scaling property.  This gives
\begin{align*}|\hat{\mu}(\xi)|\le \int_{G(d, k)} \  \prod_{i=1}^{k} |\hat{\phi}(\xi\cdot x_i^s)| \dd \gamma(s) & \lesssim_N  \eta^\beta + (\eta|\xi|)^{-N}. \end{align*}
Now let $0<\alpha<1$ and set $\eta=|\xi|^{-\alpha}$. Then, setting $N=\frac{\alpha \beta}{1-\alpha}$ yields
\[|\hat{\mu}(\xi)|\lesssim_{\alpha} |\xi|^{-\alpha \beta}.\]
This proves $\dimf E\ge \min\{2\alpha \beta,d\}$ and letting $\alpha\rightarrow 1$ gives $\dimf E\ge \min\{2\beta,d\}$ as required. Moreover, if $\beta>d/2$, $E$ has positive $d$-dimensional Lebesgue measure.

It remains to address the measurability issue mentioned earlier. If $s \mapsto t_s$ is not measurable then we discretise $\mu$ as follows.  Let $\{z_1,z_2,\ldots,z_m\}$ be a maximal $(1/n)$-separated set of points in $\Gamma$ and define a measure $\gamma_n$ by
\[\gamma_n = C_{\gamma,n}\sum_{i=1}^m\gamma\Big(B\big(z_i,1/n \big)\Big)\delta_{z_i}\]
where $C_{\gamma,n}$ is a normalisation constant chosen such that $\gamma_n(G(d,k))=\gamma(G(d,k))$ and $\delta_{z_i}$ is a Dirac mass at $z_i$.  Then we may define  measures $\mu_n$ by replacing $\gamma$ with $\gamma_n$ in (\ref{mu-integral-general}). The  argument given above shows that $|\hat{\mu}_n(\xi)|\lesssim_{\alpha} |\xi|^{-\alpha\beta}$ for $n$ sufficiently large.  Moreover, $(\mu_n)_n$ converges weakly to a measure $\mu\in\mathcal{M}(E)$, and the proof above goes through with this measure.
\end{proof}

\section{Proof of Proposition \ref{scaling}} \label{proof2}

For $0<b<\dimh \Gamma$,  Frostman's  lemma, see  \cite[8.17. Theorem]{mattila_1995},  guarantees  the existence of a compactly supported measure $\gamma \in \mathcal{M}(\Gamma)$ such that, \begin{align}\label{frostman-b-condition}
\gamma(B(s,r))\lesssim_b  r^b
\end{align}
for all $s\in G(d,k)$ and $r>0$. Straight from the definition of $S_{ \xi ,\eta}$, there exists a  constant $C \geq 1$ such that  $S_{ \xi ,\eta} \subseteq T_{ \xi ,\eta}$ where
\[ T_{ \xi ,\eta}:= \{s\in G(d,k): d_{(d,k)}(s,t) < C\eta\text{ for some $t\in G(d,k)$ contained in $ \xi ^\perp$}\}\]
where $d_{(d,k)}$ is the metric defined in (\ref{d-d-k-metric}). 

Let $G_{ \xi }$ denote the set of $k$-dimensional subspaces of $ \xi ^\perp$  and note that $G_{ \xi }$ is a Grassmannian manifold isomorphic to $G(d-1,k)$. From the definition of the metric $d_{(d,k)}$, we have for $t,t'\in G_ \xi $ that $d_{(d,k)}(t,t')=d_{(d-1,k)}(t,t')$, i.e. it is safe to use the metric $d_{(d,k)}$ on $G_ \xi $. 

Let \[\{B(t_i,\eta)\cap G_{ \xi }:t_i\in G_ \xi , 1\le i\le N_\eta\}\] 
be a minimal covering of $G_ \xi $ by $\eta$-balls noting that
\begin{align} \label{coveringN}
N_\eta\lesssim \eta^{-k(d-1-k)}
\end{align}
where the exponent $k(d-1-k)$ comes from the dimension of $G_ \xi $.  For $s\in T_{ \xi ,\eta}$  there exists  $t\in G_ \xi $ with $d_{(d,k)}(s,t)<C\eta$ and  $t_i$, the centre of a ball in the covering, such that $d_{(d,k)}(t,t_i)<\eta$. Therefore
\begin{align}
    T_{ \xi ,\eta} \subseteq \bigcup_{i=1}^{N_\eta} B(t_i,(C+1)\eta)\label{t-xi-eta}
\end{align}
allowing us to relate coverings of $G_ \xi $ to coverings of $T_{ \xi ,\eta}$. Using  (\ref{frostman-b-condition}), (\ref{coveringN}) and  (\ref{t-xi-eta}) we get
\begin{align*}
  \gamma(S_{ \xi ,\eta})  \leq \gamma(T_{ \xi , \eta})\le \sum_{i=1}^{N_\eta} \gamma(B(t_i,(C+1)\eta))
    \lesssim_b \eta^{-k(d-1-k)} \eta^{b}
    = \eta^{b-k(d-1-k)}
\end{align*}
completing the proof.

\section{Proof of Proposition  \ref{spheres}} \label{proof3}
Let $\gamma$ be the normalised spherical measure on $\Gamma$.  Let $\xi \in \mathbb{R}^d \setminus\{0\}$ and $\eta>0$. We may assume that $\eta$ is much smaller than the diameter of $\Gamma$. Estimating $\gamma(S_{ \xi, \eta})$  immediately reduces to estimating  the $\gamma$-volume of the intersection of the sphere $\Gamma$ with the $\eta$-neighbourhood of a plane of the same dimension.  This in turn reduces to understanding the intersection of spheres and planes. There are two types of such intersection: tangential and non-tangential.   We need a parameter $r$  to make this distinction more quantitative.  In the non-tangential case, the intersection of a $(d-2)$-dimensional sphere and a plane of the same dimension is itself a sphere of dimension $(d-3)$. (In the case $d=3$ we think of a 0-dimensional sphere as  two distinct  points with centre given by the midpoint.)   Let $r>0$ be the minimal distance from the centre of the intersection to the  original sphere $\Gamma$.  The case $r=0$ is then the tangential case.  If $r \leq 3 \eta$, then $S_{ \xi, \eta}$ is contained in  a ball of radius $\lesssim \sqrt{\eta}$, where the square root comes from the basic geometry of a sphere.  Therefore
\[
\gamma(S_{ \xi, \eta}) \lesssim \sqrt{\eta}^{(d-2)} = \eta^{d/2-1}.
\]
On the other hand, if $r > 3 \eta$, then  $S_{ \xi, \eta}$ is contained in an $\eta/\sqrt{r}$-thickening of a sphere of dimension $(d-3)$ and diameter $\lesssim \sqrt{r}$.  This can be covered by
\[
\lesssim \left(\frac{\sqrt{r}}{\eta}\right)^{d-3} \frac{\eta/\sqrt{r}}{\eta}  = \eta^{3-d}\,  r^{d/2-2}
\]
many $\eta$-balls each of $\gamma$ measure $\lesssim \eta^{d-2}$.  Therefore,
\[
\gamma(S_{ \xi, \eta}) \lesssim  \eta \, r^{d/2-2} \lesssim  \eta^{\min\{1, d/2-1\}}.
\]
Therefore,   $\Gamma$ is $\min\{1, d/2-1\}$-scaling, proving the claim.

\section{Proof of Theorem  \ref{cones}} \label{proof4}

 The case $d =1$ is trivial, so assume that $d \geq 2$. The lower bound $\dimf  \mathcal{C}^d  \geq d-1$ follows (for example) by using \eqref{sphereasymp} below and considering the measure defined by 
\[ f \mapsto \int_{\mathbb{R}} \psi(r) \int_{S^{d-1}} f(r x, r) \, d\sigma(x) \, dr, \]
for any non-negative Borel function $f$, where $\sigma$ is the rotation invariant Borel probability measure on $S^{d-1}$, and $\psi$ is a bump function on $[1,2]$ with $\int \psi = 1$. 

Suppose for a contradiction that $\dimf  \mathcal{C}^d  > d-1$. Then 
there exists $\alpha >d-1$ 
 and a Borel probability measure $\mu$ on $ \mathcal{C}^d $, such that 
\begin{equation} \label{mudecay}  \lvert\widehat{\mu}(\xi) \rvert  \lesssim \lvert\xi\rvert^{-\alpha/2} \qquad\forall \xi \in \mathbb{R}^{d+1}. \end{equation}
By symmetry, and by replacing $\mu$ with $f \mu$ for an appropriate bump function $f$ (see \cite[Lemma~1]{ekstrom}), it may be assumed that for some $\epsilon>0$, 
\begin{equation} \label{sptcdn} \supp \mu \subseteq \{ (\xi, \lvert\xi\rvert ) \in \mathbb{R}^d \times \mathbb{R}: \epsilon \leq \lvert\xi\rvert  \leq 1/\epsilon \}. \end{equation}
 Let $\nu$ be the Borel probability measure on $ \mathcal{C}^d $ defined by
\begin{equation} \label{nudefn} \int f \, d\nu = \int_{\mathbb{R}^d \times  \mathbb{R}}  \int_{S^{d-1}} f(\lvert x\rvert w, z) \, d\sigma(w) \, d\mu(x, z), \end{equation}
for any non-negative Borel function $f$. Then 
\begin{align*} \widehat{\nu}(\xi) &= \int_{\mathbb{R}^d \times \mathbb{R}} \int_{S^{d-1}} e^{-2\pi i \langle \xi, (\lvert x\rvert w  , z) \rangle } \, d\sigma(w) \, d\mu(x,z)   \\
&=  \int_{\mathbb{R}^d \times \mathbb{R}} \int_{O(d)} e^{-2\pi i \langle \xi, (Ux , z) \rangle } \, d\lambda(U) \, d\mu(x,z) \\
&= \int_{O(d)} \widehat{ \mu} (U^*(\xi_1,\dotsc ,\xi_d), \xi_{d+1} ) \, d\lambda(U),  \end{align*} 
where $\lambda$ is the Haar probability measure on $O(d)$. Hence $\nu$ satisfies
\begin{equation} \label{nudecay} \lvert \widehat{\nu} (\xi)\rvert   + \left\lvert \widehat{\widetilde{\nu}} (\xi) \right\rvert\lesssim \lvert \xi\rvert^{-\alpha/2} \qquad\forall \xi \in \mathbb{R}^{d+1}, \end{equation}
where $\widetilde{\nu}$ is the pushforward of $\nu$ under $(x_1, \dotsc, x_d, x_{d+1}) \mapsto (x_1, \dotsc, x_d, -x_{d+1})$.  Let $\pi: \mathbb{R}^{d+1} \to \mathbb{R}$ be the map $(x_1,\dotsc,x_d,x_{d+1}) \mapsto x_{d+1}$. Since $\supp \mu \subseteq  \mathcal{C}^d $, and by \eqref{sptcdn}, the formula \eqref{nudefn} can also be written as 
\[ \int f \, d\nu = \int_{\mathbb{R}} \int_{S^{d-1}} f(zw, z) \, d\sigma(w) \, d\pi_{\#}\mu(z). \] 
Hence another expression for $\widehat{\nu}$ is
\[ \widehat{\nu}(\xi) = \int e^{-2\pi i z \xi_{d+1} } \widehat{\sigma}(z (\xi_1, \dotsc, \xi_d)) \, d\pi_{\#}\mu(z). \]
Let $\omega_{d-1}$ be the surface area of $S^{d-1}$. The asymptotic formula (see \cite[Appendix~B]{grafakos})
\begin{equation} \label{sphereasymp} \widehat{\sigma}(\xi) = \frac{2}{\omega_{d-1}}\lvert \xi\rvert^{-(d-1)/2} \cos\left(2 \pi \lvert \xi\rvert  - \frac{\pi(d-1)}{4}  \right) + O\left(\lvert \xi\rvert^{-(d+1)/2}\right), \end{equation}
gives, by taking $\xi_{d+1} = \lvert (\xi_1, \dotsc, \xi_d)\rvert$, 
\begin{multline*}  e^{ \frac{i\pi(d-1)}{4} } \widehat{\nu}(\xi, \lvert \xi\rvert ) +  e^{ \frac{-i\pi(d-1)}{4} }\widehat{ \widetilde{\nu} }(\xi, \lvert \xi\rvert )  \\
= \frac{4}{\omega_{d-1}}  \lvert \xi\rvert^{-(d-1)/2} \int z^{-(d-1)/2} \cos^2\left( 2 \pi \lvert \xi\rvert  z  - \frac{\pi(d-1)}{4} \right) \, d\pi_{\#}\mu(z) \\ +O\left(\lvert \xi\rvert^{-(d+1)/2}\right). \end{multline*}
Comparing \eqref{nudecay} to the above will give a contradiction, by the following identity:
\begin{equation} \label{halfidentity} \lim_{r \to \infty} \int \cos^2\left(r z+t \right) \, d\pi_{\#}\mu(z) = 1/2 \qquad\forall t \in \mathbb{R}.  \end{equation}
It remains to prove \eqref{halfidentity}. Since $d \geq 2$ and $\alpha/2 > (d-1)/2 \geq 1/2$, condition \eqref{mudecay} gives $\pi_{\#} \mu \in L^2(\mathbb{R})$ (see e.g.~\cite[Theorem~3.3]{mattila_2015}). Hence $\pi_{\#} \mu \in L^1(\mathbb{R})$ with $\lVert \pi_{\#} \mu \rVert_1=1$, and \eqref{halfidentity} then follows by approximating $\pi_{\#} \mu$ in $L^1$ with a finite linear combination of characteristic functions of disjoint intervals.

\section{Further work and some questions}

The simplicity of Oberlin's argument in \cite{oberlin} makes it very appealing to try to adapt it to a range of different problems, such as the ones we consider here.  Another problem is the following dual to the Kakeya problem, motivated by work of Wolff \cite{wolff}.  Suppose $E \subseteq \mathbb{R}^d$ contains a sphere of every radius $r \in (0,1)$.  Wolff \cite{wolff} proved that $E$ necessarily has Hausdorff dimension $d$.  This result was proved by Kolasa and Wolff for $d \geq 3$ \cite{wolff1}, which is much easier than the $d=2$ case.  Here there is a trivial lower bound of $d-1$ for the Fourier dimension, since a single sphere in $\mathbb{R}^d$ has Fourier dimension $d-1$, see \cite[Equation (3.42)]{mattila_2015}.  For Kakeya sets there is no non-trivial bound since line segments have Fourier dimension 0 for $d \geq 2$.  Despite non-trivial estimates existing for the Fourier dimension of Kakeya sets, we are unaware of any improvement over the trivial lower bound for the dual problem and pose this as a question.

\begin{ques} \label{ques}
If $E \subseteq \mathbb{R}^d$ contains a sphere of every radius $r \in (0,1)$, then is it true that $\dimf E = d$?
\end{ques}

For $E$ as in Question \ref{ques},  for every $r \in (0,1)$ there is a centre $x_r$ such that
\[
x_r+r \theta \in E
\]
for all $\theta \in S^{d-1}$. One can try to adapt Oberlin's argument by defining a  measure $\mu$ on $E$ as in \eqref{mu-integral-general}  by, for example, 
\[
\int_{E}f\dd \mu=\int_{0}^1\int_{S^{d-1}}f\left(x_r+ r\theta\right)\phi(r) \dd \sigma^{d-1}\theta \dd r
\]
where $\sigma^{d-1}$ is the spherical measure on $S^{d-1}$.  Using Fubini's theorem the Fourier transform of $\mu$ at $\xi\in\mathbb{R}^d$ is then
\begin{align*}
    \hat{\mu}(\xi) &=\int_{S^{d-1}}\int_{0}^1e^{-2\pi i \left(x_r+ r\theta\right) \cdot \xi}\phi(r)\dd r  \dd \sigma^{d-1}\theta. 
\end{align*}
The problem now comes that the centre $x_r$ depends on $r$ (not on $\theta$ for example) and so cannot be integrated out to reduce the problem to studying the Fourier transform of $\phi$.  Alternatively, one could try to associate the Schwartz function $\phi$ with the parameter $\theta$, but this leads to a very awkward integral.

Another very intriguing problem is, of course, the Fourier analytic formulation of the Kakeya problem.

\begin{ques} \label{kakeyaques}
If $E \subseteq \mathbb{R}^d$ contains a unit line segment in every direction, then is it true that $\dimf E = d$?
\end{ques}

We are not aware of any improvements over $\dimf E \geq  2$ for Kakeya sets $E$.  This estimate  follows from Corollary \ref{dimensionthm2}.  Oberlin only gives this result for $d=2$ and the improvement we obtain requires the additional geometric argument used in proving Proposition \ref{scaling}.  We have some doubts about a positive answer to Question \ref{kakeyaques} in general.

The measures $\mu$ we use in this paper  do not take into account the specific placement of the lines (or $k$-dimensional balls); indeed, when bounding the Fourier transform of $\mu$ the dependency on the translation $t_s$ is simply removed by making a trivial estimate.  It would be interesting to try to modify the measure to account for translations, but we do not know how to do this in an effective way.

\section*{Acknowledgements}

We thank Pertti Mattila for helpful discussions, especially for pointing out the  papers  \cite{ekstrom, oberlin_israel}. We also thank Pablo Shmerkin for helpful discussions and Tuomas Orponen for pointing out some references.

\end{document}